\documentclass[12pt]{amsart}
\usepackage{graphicx} 

\usepackage{BAstyle}
\usepackage[margin=1in]{geometry}
\DeclareMathOperator{\cond}{cond}

\DeclareMathOperator{\PGL}{PGL}
\DeclareMathOperator{\PSL}{PSL}
\newcommand{\PP}{\mathbb{P}}

\title[Inertial Multiplicity Bounds]{Inertial multiplicity bounds for two dimensional projective representations and bounds for the number of $\operatorname{PSL}_2(\mathbb{F}_q)$ and $\operatorname{PGL}_2(\mathbb{F}_q)$ number fields}
\author{Brandon Alberts}

\begin{document}

\begin{abstract}
    We prove upper bounds for certain number field counting functions using Serre's modularity conjecture (now a theorem of Khare--Wintenberger). These results are comparable to sharp upper bounds for the number of abelian extensions with fixed or bounded discriminant, with Serre's modularity conjecture playing the role of class field theory.
\end{abstract}

\maketitle

\section{Introduction}

We give a bound for the number of odd irreducible two dimensional projective representations over $\F_q$ with prescribed tame inertia. Let $G_\Q$ denote the absolute Galois group over $\Q$ and $I_p$ denote the inertia group at a prime $p$, which is well-defined up to conjugacy as a subgroup of $G_\Q$. If $G$ is a finite group, $\rho:G_\Q\to G$ is a continuous surjective homomorphism, and $C\le G$ is a cyclic subgroup, let $\inv_{G,C}(\rho)$ denote the product of primes $p\nmid |G|$ for which $\rho(I_p)$ is conjugate to $C$.

The following is a pointwise inertial multiplicity bound for irreducible subgroups of $\PGL_2(\F_q)$, up to a ramification condition at $\infty$ when $q$ is odd.
\begin{theorem}\label{thm:main}
    Let $q=\ell^a$ be a prime power, $V=\F_q^2$, and $G\le \PGL_2(\F_q) = \PGL(V)$ an irreducible subgroup. Then for each tuple $(N_C)$ indexed by conjugacy classes of nontrivial cyclic subgroups $C\le G$,
    \[
        \#\left\{\bar{\rho}:G_\Q\to \PGL_2(\F_q) : \substack{\displaystyle\bar\rho \text{ is odd, irreducible, }\bar\rho(G_\Q)=G\text{, and}\\\displaystyle\text{for each cyclic }C\le G,\ \inv_{G,C}(\bar{\rho}) = N_C}\right\} \ll_{q} \prod_C N_C^{2e(G,C)},
    \]
    where
    \[
        e(G,C) = \begin{cases}
            2 & C\text{ is semisimple (that is its action on }\PP^1(\overline\F_\ell)\text{ has two fixed points)}\\
             &\hspace{0.5cm} \text{ and }G\text{ contains an element swapping the two fixed points of }C\\
            1 & \text{else.}
        \end{cases}
    \]
\end{theorem}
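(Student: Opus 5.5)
Fix $G$ and the tuple $(N_C)$, and let $N=\prod_C N_C$. The idea is to lift each projective $\bar\rho$ to a linear representation, apply Serre's modularity conjecture (Khare--Wintenberger), and count the resulting newforms. Tate's theorem gives a lift $\rho:G_\Q\to \GL_2(\overline{\F}_\ell)$ of $\bar\rho$. After twisting by a character we can take this lift to be minimally ramified at each $p\nmid |G|\ell$: at such $p$ the image $\bar\rho(I_p)$ is tame and cyclic, and a character twist kills the scalar part of $\rho(I_p)$ as far as possible. The lift has values in $\GL_2(\F_{q'})$ for some $q'$ bounded in terms of $q$. The ramification at primes dividing $|G|$, and at $\ell$, involves only finitely many primes with bounded local behaviour, and so contributes only a factor $O_q(1)$. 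The lift is odd and irreducible because $\bar\rho$ is. By Khare--Wintenberger it arises from a newform of level $N(\rho)$ (the prime-to-$\ell$ Artin conductor), with weight $k(\rho)\ll_q 1$ and a nebentypus character $\varepsilon$.

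The key local step is to bound, for each prime $p\mid N_C$, the exponent of $p$ in $N(\rho)$ and in $\cond(\varepsilon)$. Suppose first that $C$ is unipotent, so that $\ell\mid|C|$. This cannot happen for $p\nmid|G|$, since tame inertia there has order prime to $p$ and the image divides $|G|$; fine. In any case, at tame $p$ the image $\rho(I_p)$ is cyclic of order prime to $p$, so $\rho|_{I_p}$ is a sum of two tame characters $\chi_1\oplus\chi_2$. We twist so that one character is trivial when $\rho|_{D_p}$ is reducible, that is when the fixed points of $C$ are not swapped by $D_p$; then $p\| N(\rho)$ and $\varepsilon$ has $p$-part of conductor $p$. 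When $D_p$ swaps the fixed points, the characters are Galois conjugate, so they cannot be twisted away. This forces $p^2\| N(\rho)$. That is only possible when $G$ contains an element swapping the fixed points of $C$, which is exactly the case $e(G,C)=2$. Hence $N(\rho)$ divides $\prod_C N_C^{e(G,C)}$ up to an $O_q(1)$ factor, and the same bound holds for the conductor of $\varepsilon$ (in the swapped case $\det$ is $\chi_1\chi_1^p$, which has conductor $p$).

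Next we count. The number of $\bar\rho$ is at most the sum over the boundedly many weights $k$, over characters $\varepsilon$ of conductor dividing $M=\prod_C N_C^{e(G,C)}$, and over reductions of eigenforms in $S_k(\Gamma_1(M),\varepsilon)$. For each fixed $\varepsilon$ the dimension of this space is $\ll_k M\prod_{p\mid M}(1+1/p)\ll_\epsilon M^{1+\epsilon}$. The number of characters is at most $\varphi(M)\le M$. Since the distinct mod-$\ell$ eigensystems number at most the dimension, summing over $\varepsilon$ gives $\ll M^{2+\epsilon}$, which is roughly $\prod_C N_C^{2e(G,C)}$. To remove the $\epsilon$ and get a clean bound, use $\dim S_k(\Gamma_1(M))\ll_k M^2$ directly, which already counts all nebentypes at once and gives $\ll_q M^2=\prod_C N_C^{2e(G,C)}$. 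Finally, different lifts give the same $\bar\rho$, so the map from $\bar\rho$ to eigenforms is injective up to choices. The choices are the twist and the lift, and fixing minimal conductor leaves at most $O_q(1)$ of them.

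\textbf{Main obstacle.} The hardest step is the local conductor analysis. We must show uniformly that some twist of the lift achieves the exponent $e(G,C)$ at every $p\mid N_C$ at once, while keeping the twisting character's conductor supported on these $p$ with bounded exponents. If we twist by a global Dirichlet character, the local twists must be glued. The plan is to use a product of local characters: a character of $(\Z/M)^\times$ with prescribed restriction to each $\Z_p^\times$ exists by CRT. We also need care that $\varepsilon$ is counted only by its support and not by $p$-power exponents. Weight and level at $\ell$ and at primes dividing $|G|$ must be shown to be bounded in terms of $q$; this follows from Serre's recipe, since the images of wild inertia are bounded in size.
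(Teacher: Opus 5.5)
Your overall route matches the paper's: a Tate lift, twisting to minimize the conductor at tame primes, Khare--Wintenberger, and then $\sum_{k\le \ell^2}\dim S_k(\Gamma_1(M))\ll_q M^2$. Your semisimple analysis, which asks whether $D_p$ rather than $G$ swaps the two fixed points, is fine and gives the same exponents. The genuine gap is in the local step. You dismiss unipotent $C$ as something that "cannot happen for $p\nmid|G|$". You then say that since $\rho(I_p)$ has order prime to $p$, $\rho|_{I_p}$ is a sum of two characters. Both claims are false.

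First, a unipotent $C$ has order $\ell\neq p$, and tame inertia at $p$ surjects onto $\Z/\ell\Z$. Frobenius only has to act on $C$ by $x\mapsto x^p$, which the normalizer of $C$ in $G$ (inside a Borel) allows for suitable $p$. A standard example is the mod-$\ell$ representation of a semistable elliptic curve at a prime $p$ of multiplicative reduction with $\ell\nmid v_p(\Delta)$. Second, complete reducibility of $\rho|_{I_p}$ needs $|\rho(I_p)|$ to be prime to the characteristic $\ell$, not to $p$. When $\ell\mid|\rho(I_p)|$ the restriction is a non-split extension. So for unipotent $C$ your argument only gives the trivial bound $\nu_p(N(\rho))\le 2$. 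That yields $N_C^4$ in the count instead of the claimed $N_C^{2e(G,C)}=N_C^2$.

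The missing piece is the paper's case (3). Since $\bar\rho(\mathrm{Frob}_p)$ normalizes $C$, it fixes the unique fixed point of $C$ on $\PP^1(\overline\F_\ell)$. Hence $\rho(D_p)$ lies in a Borel subgroup. After twisting away the wild scalar part, as you already do, $\rho(I_p)$ lies in $Z\times U$, the scalars times the unipotent radical. Write $\rho(\sigma)=\lambda u$ with $\lambda$ scalar and $u\in U$. The tame relation $\rho(\varphi)\rho(\sigma)\rho(\varphi)^{-1}=\rho(\sigma)^p$ holds, and conjugation by $\rho(\varphi)$ preserves $U$. Together these force $\lambda^p=\lambda$, so $\lambda$ extends to a character of $G_{\Q_p}$ and hence to a Dirichlet character ramified only at $p$. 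Twisting by its inverse makes $\rho(I_p)$ a nontrivial unipotent group with one-dimensional invariants, so $p\,\|\,N(\rho)$. The paper phrases this as directly building the tame local lift $\sigma\mapsto h$, $\varphi\mapsto F$, using $FhF^{-1}=h^{a/d}$ with $a/d=p$ in $\F_\ell^\times$. With this case added, your argument goes through.
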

The proof is short and not particularly novel. A lifting theorem of Tate implies that any such $\bar{\rho}$ can be lifted to an odd irreducible representation $\rho:G_\Q\to \GL_2(\F_{q^d})$ for some $d\ge 1$ with prescribed ramification. The result then follows directly from Serre's modularity conjecture together with bounds for the dimension of the space of newforms. An average form of this result for non-projective representations over prime power levels appears in \cite{centeleghe2011computing}, which uses Serre's modularity conjecture in essentially the same way. Yet, when phrased as a multiplicity bound Theorem \ref{thm:main} has striking applications to number field counting. The purpose of this note is to be a proof of concept for applications of Serre's modularity conjecture (and therefore the Langlands program in general) to number field counting.

For any finite transitive group $G$, define the counting function
\[
    \mathcal{F}_{\disc,\Q}(G;X) = \{\rho:G_\Q\to G:\text{surjective, }\disc(\overline{\Q}^{\rho^{-1}(\Stab_G(1))}/\Q)\le X\}.
\]
Given a projective faithful irreducible representation $G\hookrightarrow \PGL_2(\F_q)$, we define $\mathcal{F}_{\disc,\Q}^{\rm odd}(G;X)$ to be the subset of such homomorphisms which are odd, that is $\det\rho(c) = -1$ where $c\in G_\Q$ is complex conjugation. The ``odd'' condition is really a local restriction at the infinite place.

The following is an immediate consequence of Theorem \ref{thm:main}.
\begin{corollary}\label{cor:PSL2_PGL2}
    Let $q$ be a prime power. Let $G$ be any irreducible subgroup of $\PGL_2(\F_q)$ in any one of its degree $n$ faithful transitive representations, such as $\PSL_2(\F_q)$ or $\PGL_2(\F_q)$ itself. Let $\beta(G) = \max_{g\ne 1} \frac{2e(G,\langle g\rangle) + 1}{\ind_n(g)}$, where $\ind_n(g) = n - \#\{\text{orbits of }g\}$.
    \begin{enumerate}[(i)]
        \item If $2\mid q$, then $\#\mathcal{F}_{\disc,\Q}(G;X) \ll_{q,\epsilon} X^{\beta(G)+\epsilon}$.
        \item If $2\nmid q$, then $\#\mathcal{F}^{\rm odd}_{\disc,\Q}(G;X) \ll_{q,\epsilon} X^{\beta(G)+\epsilon}$.
    \end{enumerate}
    In particular, $3/a(G) \le \beta(G) \le 5/a(G)$, where $a(G) = \min_{g\ne 1}\ind_n(g)$.
\end{corollary}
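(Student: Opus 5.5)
We deduce the corollary from Theorem \ref{thm:main} by a standard dyadic/Dirichlet-series summation over the tuples $(N_C)$. Fix $G\le\PGL_2(\F_q)$ irreducible, acting faithfully and transitively on $n$ points. A surjection $\rho:G_\Q\to G$ in $\mathcal{F}_{\disc,\Q}(G;X)$ (or its odd subset when $q$ is odd) is automatically irreducible when viewed projectively. It is odd when $2\mid q$: in characteristic $2$ the condition is vacuous, so the odd restriction can be dropped, which is why case (i) has no oddness hypothesis. For primes $p\nmid |G|$ the ramification is tame, so $\rho(I_p)=\langle g_p\rangle$ is cyclic. The exponent of $p$ in the discriminant of the degree $n$ field is then exactly $\ind_n(g_p)$. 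Since $\ind_n$ is constant on generators of a cyclic group up to conjugacy, we may write $\ind_n(C)$. The wild primes $p\mid |G|$ are finitely many, and their discriminant exponents are bounded in terms of $n$, hence in terms of $q$. Thus
\[
\disc(\rho)\asymp_q \prod_C N_C^{\ind_n(C)},
\]
where $N_C=\inv_{G,C}(\rho)$ and the implied constant absorbs the wild part.

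The first step is to bound the count by a sum over tuples. Write $\mathcal{C}$ for the set of conjugacy classes of nontrivial cyclic subgroups. Then
\[
\#\mathcal{F}^{(\rm odd)}_{\disc,\Q}(G;X)\ll_q \sum_{(N_C)} \prod_C N_C^{2e(G,C)},
\]
where the sum runs over tuples of pairwise coprime squarefree integers with $\prod_C N_C^{\ind_n(C)}\le cX$. The second step estimates this sum by Rankin's trick, or equivalently by noting that the number of such tuples is small. Substitute $N_C^{2e}=(N_C^{\ind_n(C)})^{2e/\ind_n(C)}$. Each term is then at most $\prod_C M_C^{\gamma_C}$, where $M_C=N_C^{\ind_n(C)}$ and $\gamma_C=2e(G,C)/\ind_n(C)$. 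Since $\gamma_C\le \beta(G)-1/\ind_n(C)\le\beta(G)-1/\ind_n(C)$, we get $\prod_C M_C^{\gamma_C}\le X^{\beta(G)}\prod_C N_C^{-1}$. It remains to bound
\[
\sum_{\prod_C N_C^{\ind_n(C)}\le X}\ \prod_C N_C^{-1}.
\]
This is at most $\prod_C\sum_{N\le X}1/N\ll (\log X)^{|\mathcal{C}|}\ll_\epsilon X^{\epsilon}$, which gives the bound $X^{\beta(G)+\epsilon}$. The main care point is this exponent bookkeeping. One must verify that the ``$+1$'' in the definition of $\beta$ is exactly what pays for the number of squarefree $N_C$ of a given size, and it does, via the harmonic sum.

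The final step checks the range $3/a(G)\le\beta(G)\le 5/a(G)$. The upper bound follows since $e\le 2$ and $\ind_n(g)\ge a(G)$, so each ratio is at most $5/a(G)$. For the lower bound, take $g$ attaining $\ind_n(g)=a(G)$. Then $2e+1\ge 3$, so $\beta(G)\ge 3/a(G)$.
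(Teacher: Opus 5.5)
Your proof is correct, and its skeleton is the paper's. You apply Theorem \ref{thm:main} to each tuple $(N_C)$ and use $\disc\ge\prod_C N_C^{\ind_n(C)}$. Only this lower bound is needed, so your wild-prime bookkeeping and the constant $c$ are harmless but unnecessary. You note that oddness is automatic in characteristic $2$, and then sum over tuples.

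The difference is in how you bound $\sum_{\prod_C N_C^{\ind_n(C)}\le X}\prod_C N_C^{2e(G,C)}$. The paper forms the Euler product $\prod_p\bigl(1+\sum_C p^{2e(G,C)-s\,\ind_n(C)}\bigr)$, checks absolute convergence for ${\rm Re}(s)>\beta(G)$, and concludes with Perron's formula on ${\rm Re}(s)=\beta(G)+\epsilon$. You instead bound each term pointwise by $X^{\beta(G)}\prod_C N_C^{-1}$, using $2e(G,C)\le\beta(G)\ind_n(C)-1$, and then sum harmonic series.

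Your route is fully elementary and gives the slightly sharper bound $X^{\beta(G)}(\log X)^{|\mathcal C|}$. The paper's Dirichlet series keeps the squarefree and coprimality structure. Combined with a Tauberian theorem, it would give the true order $X^{\beta(G)}(\log X)^{b-1}$ of this auxiliary sum, where $b$ is the number of classes $C$ attaining the maximum that defines $\beta(G)$.

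A minor point: your chain ``$\gamma_C\le\beta(G)-1/\ind_n(C)\le\beta(G)-1/\ind_n(C)$'' repeats a term.
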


The state of the art for uniform exponent bounds in general is work of Lemke Oliver \cite{lemkeoliver2023uniform}. Lemke Oliver leveraged the finer structure of finite primitive permutation groups to prove several uniform exponent bounds for the number of extensions with discriminant $\le X$, and proved several results balancing various methods of doing so. For linear groups of rank $m$, in particular the simple linear groups $\PSL_2(\F_q)$ of rank $2$, Lemke Oliver proves bounds of the form $\ll_q X^{O_m(1)}$. The bounds we prove in Corollary \ref{cor:PSL2_PGL2} are $\ll_q X^{O(1/q)}$, which follows from $a(G) \gg q$ for each of these groups (see \cite[Example on page 16]{lemkeoliver2023uniform}). These bounds are within a fixed power of Malle's predicted bound, $X^{1/a(G)+\epsilon}$ \cite{malle2002distribution}. In particular, $\PSL_2(\F_{2^d})$ with $d > 1$ is an infinite family of nonabelian simple groups for which Corollary \ref{cor:PSL2_PGL2} proves an upper bound, without a local restriction at $\infty$, that is within a fixed power of Malle's predicted bound.

Instead of comparing with Lemke Oliver's results, which are the best known for general permutation groups, our results more closely resemble those of abelian groups: when $G$ is abelian, Malle's predicted upper bound is proven by M\"aki over $\Q$ \cite{maki1985density}, and by Wright in general \cite{wright1989distribution}. The proofs are via class field theory, where $G$ is viewed as a product of subgroups of $\GL_1(\F_q)$ (i.e., a product of cyclic groups). The upper bounds stemming from their method can be interpreted as a multiplicity bound of the form
\[
    \#\{\rho:G_k\to \GL_1(\F_q) : \rho(G_k)=G\text{, and for each cyclic }C,\ \inv_{G,C}(\rho) = N_C\} \ll_{q,\epsilon} \prod_{C\le G} N_C^{\epsilon}
\]
for an arbitrary number field $k$, where $G\le \GL_1(\F_q)$. The use of Serre's modularity conjecture replaces class field theory with a part of Langlands program in dimension $2$. The Langlands program is colloquially understood as playing the role of ``nonabelian class field theory'', and the main results of this paper provide the first example known to the author of results in the Langlands program in dimension $>1$ being used for number field counting in a similar way to class field theory. (See \cite{lipnowski2016bhargava} for work of Lipnowski in the opposite direction, assuming number field counting results to prove statements about modular representations and elliptic curves).

\begin{remark}
    These results may be combined with existing methods in the literature to inductively produce similar quality bounds for several related groups, including $\GL_2(\F_q)$, its irreducible subgroups, and direct products of two or more of these groups. We refer to the existing literature (such as \cite{ALOWW,lemkeoliver2023uniform}) for the precise methods used to produce upper bounds for such groups from Theorem \ref{thm:main}.
\end{remark}

\section{Proof of the results}
 
Throughout, $q = \ell^a$ is a prime power, $V = \F_q^2$, and $\pi : \GL_2 \to \PGL_2$ is the quotient map. We fix an embedding $\F_q\hookrightarrow \overline\F_\ell$ and view linear representations as valued in $\GL_2(\overline\F_\ell)$. A subgroup $G \le \PGL_2(\F_q)$ is \emph{irreducible} if $G$ has no fixed points in $\PP^1(\overline\F_\ell)$ (equivalently, if no conjugate of $\pi^{-1}(G)$ lies in the upper triangular matrices of $\GL_2(\overline\F_\ell)$). Every lift of an irreducible projective representation is therefore an irreducible linear representation.

We call a projective representation $\bar\rho:G_\Q\to \PGL_2(\overline\F_\ell)$ \emph{odd} if there exists a lift $\rho:G_\Q\to \GL_2(\overline\F_\ell)$ for which $\det\rho(c) = -1$, where $c\in G_\Q$ is complex conjugation. This is well-defined, as every lift is a twist $\rho\otimes \chi$ by some character $\chi:G_\Q\to Z(\GL_2(\overline\F_\ell))\cong \overline\F_\ell^{\times}$, so that
\[
    \det(\rho\otimes\chi)(c) = (\det\rho(c))(\chi(c))^2 = (\det\rho(c))\chi(c^2) = \det\rho(c).
\]
Every projective representation has a lift to $\GL_2$ as we will see below, so oddness is a well-defined property of all projective representations.

We let $e(G,1) = 0$ for convenience, to reflect that $C=1$ and primes with $\bar\rho(I_p)=1$ do not appear in any of the bounds that follow.
 
\subsection{The Artin conductor and tame inertia}
 
For a continuous representation $\rho : G_\Q \to \GL(W)$ on a finite dimensional vector space $W$ over a finite field, the Artin conductor is defined multiplicatively as $\cond(\rho) = \prod_p p^{\nu_p(\cond(\rho))}$, where
\[
    \nu_p(\cond(\rho)) = \sum_{j=0}^{\infty} \frac{|\rho(I_{p,j})|}{|\rho(I_{p,0})|} \dim\left(W/W^{\rho(I_{p,j})}\right)
\]
with $I_{p,j}$ the filtration of higher ramification subgroups in the lower numbering and $I_{p,0} = I_p$ the inertia group at $p$. When $\rho$ is tamely ramified at $p$, that is $\rho(I_{p,j}) = 1$ for $j \ge 1$, only the first term survives and $\nu_p(\cond(\rho)) = \dim(W/W^{\rho(I_p)})$.
 
\subsection{Lifting projective representations}
 
A lifting theorem of Tate shows that all projective representations lift to linear representations, and by twisting one can prescribe the ramification at all finite places. We summarize the consequence we will use in the following lemma.
 
\begin{lemma}\label{lem:tate}
Let $\bar\rho : G_\Q \to \PGL_2(\F_q)$ be continuous, irreducible, and odd, with image $G$. Then there exist $d \ge 1$ and a continuous lift $\rho : G_\Q \to \GL_2(\overline\F_{\ell})$ of $\bar\rho$ that is irreducible, odd, unramified at all primes unramified in $\bar\rho$, and satisfies the following bounds on its Artin conductor:
\[
    \cond(\rho) \ll_{\ell,|G|} \prod_{C\le G} \inv_{G,C}(\bar\rho)^{e(G,C)},
\]
where $e(G,C)$ is defined as in Theorem \ref{thm:main}.
\end{lemma}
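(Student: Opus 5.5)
We first produce an arbitrary lift and then twist it prime by prime to control its ramification. By Tate's theorem, $H^2(G_\Q,\overline\F_\ell^\times)=0$ for the trivial action, so $\bar\rho$ lifts to a continuous $\rho_0:G_\Q\to\GL_2(\overline\F_\ell)$. Its image is finite, so it lands in $\GL_2(\F_{q^d})$ for some $d$. By the discussion above, every lift is irreducible because $G$ is irreducible, and every lift is odd because oddness does not depend on the choice of lift. So the only thing to arrange is the ramification, and for that we use twists $\rho=\rho_0\otimes\chi$ with $\chi$ a Dirichlet character valued in $\overline\F_\ell^\times$.

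The local step comes next. At a prime $p$, the restriction $\rho_0|_{I_p}$ projects to $\bar\rho|_{I_p}$. If $\bar\rho$ is unramified at $p$, then $\rho_0(I_p)$ consists of scalars, giving a character $\psi_p:I_p\to\overline\F_\ell^\times$. If $p\nmid|G|$ and $\bar\rho(I_p)=C$, then $\bar\rho$ is tame at $p$. The kernel of $\pi$ is central and $\rho_0(I_p)$ is finite, so the wild inertia $P_p$ maps to a $p$-group of scalars. Since $\rho_0|_{P_p}$ is scalar, there is a character of $I_p$ (of finite order, extended through the abelianized tame quotient) whose twist kills it.

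We globalize using $\widehat{\Z}^\times=\prod_p\Z_p^\times$. Any finite-order characters $\chi_p$ of $I_p^{ab}\cong\Z_p^\times$, only finitely many of them nontrivial, come from a unique Dirichlet character $\chi=\prod\chi_p$ with $\chi|_{I_p}=\chi_p$. So we choose local twists independently at the finitely many primes where $\rho_0$ ramifies. This uses that $\det\rho_0|_{I_p}$ factors through $I_p^{ab}$, so the needed $\chi_p$ can be taken to be characters of $\Z_p^\times$, namely square-root-type adjustments of $\det$. At primes unramified in $\bar\rho$ we take $\chi_p=\psi_p^{-1}$. This makes $\rho$ unramified there and proves the unramified claim. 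At primes dividing $|G|$ or $\ell$, the conductor exponent is bounded in terms of $\ell$ and $|G|$ alone: the image of the wild part is controlled by $|G|$ and the twist order. This accounts for the implied constant $\ll_{\ell,|G|}$, though it requires a uniform bound on the wild conductor exponent. We get this bound from the fact that the possible local restrictions form a finite set up to twist.

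The main case, and the source of $e(G,C)$, is the tame primes $p\mid\inv_{G,C}(\bar\rho)$. There $\rho(I_p)$ is cyclic, say generated by $\tilde g$ lifting a generator $g$ of $C$, and $\nu_p=\dim W/W^{\rho(I_p)}\le 2$. We want to twist so that $\tilde g$ has eigenvalue $1$, which gives $\nu_p=1$. If $C$ is unipotent, $\tilde g=\lambda u$ and twisting by $\lambda^{-1}$ works. If $C$ is semisimple, $\tilde g$ has eigenvalues $\alpha,\beta$, and we want to twist by $\alpha^{-1}$ through a character of tame inertia $\Z_p^\times\to\mu$. This is where the obstruction appears. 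Tame inertia is procyclic, with Frobenius acting by $x\mapsto x^p$, and the eigenvalue characters of $\rho|_{I_p}$ must be stable under the Frobenius action, which conjugates $C$. If the decomposition group's image swaps the two fixed points, then $\alpha$ and $\beta$ are Galois-conjugate (level-$2$ characters, $\beta=\alpha^p$). In that case $\alpha$ does not extend to a character of $\Z_p^\times$, no twist can make an eigenvalue trivial, and $\nu_p=2$. That case can only occur if $G$ contains a swapping element, and we concede $e=2$ there. Otherwise $\alpha$ is a character of $\Z_p^\times$ (level $1$), and we twist it away to get $\nu_p=1\le e(G,C)$. The main obstacle is making this dichotomy precise. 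One must show that whenever no swap occurs in the local image, $\alpha|_{I_p}$ really factors through $I_p^{ab}\cong\Z_p^\times$, using the fact that tame characters of $I_p$ that extend to the decomposition group come from $\F_p^\times$. One must also check that the order of $\chi$ stays bounded in terms of $|G|$ and $\ell$. Multiplying the local exponents over all primes then gives the stated conductor bound.
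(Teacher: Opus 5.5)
Your proposal is correct and follows essentially the paper's argument. It has the same ingredients: a Tate lift; global twists assembled from local characters of $\Z_p^\times$ via $\widehat{\Z}^\times=\prod_p\Z_p^\times$ (which is what the cited Bosman theorem packages); a twist killing the scalar wild part at tame primes; and a case split in which a Frobenius not swapping the fixed points of $C$ turns the tame relation into $\alpha^p=\alpha$, so the eigenvalue character has order dividing $p-1$, extends to $G_{\Q_p}$, and can be twisted away, just as in the paper's second and third cases. Your swap dichotomy is local (image of the decomposition group) rather than global in $G$, which is harmless and slightly sharper; just write ``the image of $I_p$ in $G_{\Q_p}^{ab}$'' where you wrote $I_p^{ab}\cong\Z_p^\times$, and note that no bound on the order of $\chi$ is needed.
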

 
\begin{proof}
The quotient $\pi:\GL_2(\overline\F_\ell)\to \PGL_2(\overline\F_\ell)$ has central kernel $\ker\pi = Z(\GL_2(\overline\F_\ell)) \cong \overline\F_\ell^{\times}$. Therefore, the obstruction to lifting $\bar\rho$ to $\GL_2(\overline\F_\ell)$ lies in $H^2(G_\Q, \overline\F_\ell^\times)$, which vanishes by a theorem of Tate (see \cite[\S 6.5]{serre1977modular} for a proof that $H^2(G_K,\Q_p/\Z_p) = 0$ for all primes $p$). Thus, a continuous lift $G_\Q\to \GL_2(\overline\F_\ell)$ exists. Moreover, any such lift is odd and irreducible because $\bar\rho$ is odd and irreducible.

The full set of lifts is given by the twist of a single lift by the characters $\chi:G_\Q\to \overline\F_\ell^{\times}$, which by the usual twisting argument can be used to control ramification. Specifically, given any collection $\{\rho_p:G_{\Q_p}\to \GL_2(\overline\F_\ell)\}$ at the finite places which are local lifts of $\bar{\rho}|_{G_{\Q_p}}$ and for which $\rho_p$ is unramified for all but finitely many $p$, there exists a global lift $\rho:G_\Q\to \GL_2(\overline\F_\ell)$ of $\bar\rho$ for which $\rho|_{I_p} = \rho_p|_{I_p}$ for each finite place $p$. See \cite[Theorem 7.2.2]{bosman2011computations} for an explicit reference. As any lift is necessarily odd, we conclude that we may choose a lift with inertia given by any prescribed local lifts (as long as at most finitely many places are ramified).

If $p\nmid \ell|G|\infty$ and $\bar\rho(I_p) = 1$, then there exists a local lift $\rho_p$ of $\bar\rho|_{G_{\Q_p}}$ which is unramified. If $p\nmid \ell|G|\infty$ is ramified in $\bar\rho$ with $\bar\rho(I_p) = C$, there are three cases:
\begin{enumerate}
    \item If $e(G,C) = 2$, that is $C$ is semisimple and $G$ permutes the fixed points of $C$, then any choice of local lift satisfies $\dim(V/V^{\rho_p(I_p)}) \le 2 = e(G,C)$. We claim that there exists a tame local lift, so that $e(G,C)$ is an upper bound for $\nu_p(\cond(\rho_p))$. Given a local lift $\rho_p$, the fact that $\overline{\F}_{\ell}^{\times}$ is abelian together with Kronecker--Weber implies there exists a character $\chi:G_\Q\to \overline{\F}_{\ell}^{\times}$ unramified away from $\{p,\infty\}$ for which $\chi|_{I_p} = \chi_{I_{p,1}} = \rho_p|_{I_{p,1}}$. Thus, twisting by $\chi^{-1}$ gives the tame local lift $\rho_p\otimes \chi^{-1}$.
    
    \item If $e(G,C) = 1$ and $C$ is semisimple, then no element of $G$ can swap the fixed points of $C$. Thus the normalizer of $C$ is the maximal torus containing $C$, which is the same as its centralizer. More concretely, choose a basis of $V\otimes \overline\F_\ell$ which generates the two lines in $\PP^1(\overline\F_\ell)$ fixed by $C$. The torus $T = \langle \left(\begin{smallmatrix} a & 0 \\ 0 & d\end{smallmatrix}\right)\rangle$ is the subgroup of $\GL_2(\overline\F_\ell)$ fixing the two lines, so that the normalizer and centralizer of $C$ are both given by $N_G(C) = C_G(C) = G \cap (T/Z)$, where $Z = \langle aI\rangle$ is the center. Any lift of Frobenius in $T$ necessarily commutes with $C$ by landing in the centralizer, so that the tame relation implies $x^p = x$ for each $x\in C$. Noting that $T/Z = Z\langle (\begin{smallmatrix} 1 & 0 \\ 0 & d\end{smallmatrix})\rangle/Z$, we may choose a lift $\rho_p$ for which $\rho_p(I_p) = \langle(\begin{smallmatrix} 1 & 0\\ 0 & d\end{smallmatrix})\rangle$ with $d$ of order $|C|$, so that $x^p=x$ for any element in this subgroup thus respecting the tame relation with any choice of lift for Frobenius. In this case, $\dim(V/V^{\rho_p(I_p)}) \le 1 = e(G,C)$.

    \item If $e(G,C) = 1$ and $C$ is unipotent, then $|C| = \ell$, which we recall is distinct from $p$. Any element normalizing $C$ fixes the unique fixed point of $C$ on $\PP^1(\overline\F_\ell)$, so $N_G(C)$ is contained in the Borel subgroup $B/Z$ stabilizing this point. In particular $\bar\rho(\mathrm{Frob}_p) \in B/Z$, as it normalizes $\bar\rho(I_p) = C$. Choose a basis in which this fixed point is spanned by $e_1$, so that $C$ is upper triangular unipotent, and for which $h=\left(\begin{smallmatrix}1 & 1\\ 0 & 1\end{smallmatrix}\right)$ is the unique lift of a chosen generator of $C$ having $1$ as an eigenvalue. For any upper triangular lift $F = \left(\begin{smallmatrix}a & \ast\\ 0 & d\end{smallmatrix}\right)$ of $\bar\rho(\mathrm{Frob}_p)$ one computes $F h F^{-1} = h^{a/d}$ which is independent of the choice of lift. As $\langle h\rangle = C$ has order $\ell$, it must be that $a/d \in \F_\ell^\times$. Moreover, $a/d = p$ as $\bar\rho(\mathrm{Frob}_p)$ induces $x \mapsto x^p$ on $C$. Hence $FhF^{-1} = h^p$, and $\sigma \mapsto h$, $\varphi \mapsto F$ defines a tame local lift $\rho_p$ of $\bar\rho|_{G_{\Q_p}}$ with $V^{\rho_p(I_p)} = \langle e_1 \rangle$, so that $\dim(V/V^{\rho_p(I_p)}) = 1 = e(G,C)$.
\end{enumerate}

Thus, if $p\nmid \ell|G|\infty$ then $\nu_p(\cond(\rho)) \le e(G,\bar\rho(I_p))$. Finally, if $p\mid \ell|G|$ choose a lift which minimizes $\nu_p(\cond(\rho))$. We note that there are only finitely many homomorphisms $G_{\Q_p}\to G$, so there are only finitely many possible lifts $\rho_p$ for each $p\mid \ell|G|$. This implies $\nu_p(\cond(\rho)) \ll_{\ell,G} 1$. Put together, we conclude that
\[
    \cond(\rho) \le \prod_{p\mid \ell|G|} p^{O_{\ell,G}(1)} \prod_{p\nmid \ell|G|}p^{e(G,\bar\rho(I_p))} \ll_{\ell,G} \prod_{C\le G} \inv_{G,C}(\bar\rho)^{e(G,C)}.
\]
\end{proof}
 
\subsection{The multiplicity bound}
 
\begin{proof}[Proof of Theorem \ref{thm:main}]
Let $\bar\rho : G_\Q \to \PGL_2(\F_q)$ be odd and irreducible with $\bar\rho(G_\Q) = G$ and $\inv_{G,C}(\bar\rho) = N_C$ for each nontrivial cyclic $C \le G$ up to conjugacy. Choose a lift $\rho : G_\Q \to \GL_2(\overline\F_\ell)$ as in Lemma \ref{lem:tate}. Serre's modularity conjecture, now a theorem of Khare--Wintenberger \cite{khare2009serreI, khare2009serreII}, states that $\rho$ is modular, more specifically that there is a normalized newform $f$ of specific weight $k(\rho)$, level $N(\rho)$, and Nebentypus $\varepsilon(\rho)$ depending on $\rho$ for which $\bar\rho_f \cong \rho$.
 
Consider the map $\bar\rho \mapsto f$ given by choosing a lift $\rho$ as in Lemma \ref{lem:tate} and choosing a newform $f$ of optimal weight and level. This map is nearly injective, with fibers given by $O_G(1)$ many conjugates. Thus, it suffices to bound the number of newforms arising in this way. Serre's recipe gives a weight satisfying $2\le k(\rho)\le \ell^2$, which is bounded in terms of $\ell$. Meanwhile, the level $N(\rho)$ is given by the prime-to-$\ell$ part of the Artin conductor. By Lemma \ref{lem:tate}, we certainly have
\[
    N(\rho) \mid N := \prod_{p\mid\ell|G|}p^{O_{\ell,G}(1)}\prod_{C\le G} N_C^{e(G,C)}.
\]
Thus, it suffices to bound the number of newforms in $\bigcup_{k=2}^{\ell^2} S_k(\Gamma_1(N))$, up to an implied constant depending only on $\ell$ and $G$.

By strong multiplicity $1$ (see \cite[Theorem 5.8.2]{diamond2005first}) distinct newforms have distinct Hecke eigensystems and therefore are linearly independent in the space of cusp forms. In particular, the number of newforms of fixed weight $k$ and level dividing $N$ is bounded above by the dimension $\dim S_k(\Gamma_1(N))$. It then follows from \cite[Theorem 7]{martin2005dimensions} that the number of such newforms is bounded above by
\begin{equation}\label{eq:thm_main_bound}
    \sum_{k=2}^{\ell^2} \dim S_k(\Gamma_1(N))
    \le \sum_{k=2}^{\ell^2} \frac{k-1}{24}N^2 + O(kN) \ll_{\ell} N^2 \ll_{\ell,G} \ \prod_{C\le G} N_C^{2e(G,C)}.
\end{equation}
\end{proof}
 
\subsection{The discriminant bound}
 
\begin{proof}[Proof of Corollary \ref{cor:PSL2_PGL2}]
Let $G \le \PGL_2(\F_q)$ be irreducible and fix a faithful transitive representation of degree $n$ with point stabilizer $\Stab_G(1)$. Up to the wild places, the discriminant of a surjective projective representation $\bar\rho:G_\Q\to G$ is given by
\[
    \disc\left(\overline\Q^{\bar\rho^{-1}(\Stab_G(1))}/\Q\right) \ge \prod_{C\le G} \inv_{G,C}(\bar\rho)^{\ind_n(C)}.
\]
We will apply Theorem \ref{thm:main}, as $G$ is irreducible by definition. We note that all representations are odd in characteristic $2$, as $\det\rho(c) = \pm 1 \equiv 1\pmod 2$, so when $2\mid q$ we have $\mathcal{F}_{\disc,\Q}(G;X) = \mathcal{F}_{\disc,\Q}^{\mathrm{odd}}(G;X)$.

Writing $N_C = \inv_{G,C}(\bar\rho)$ and summing over all $\bar\rho$ giving the same tuple $(N_C)$ as in Theorem \ref{thm:main} implies
\[
    \#\mathcal F^{\mathrm{odd}}_{\disc,\Q}(G;X) \ll_{\ell,G} \sum_{\prod_C N_C^{\ind_n(C)} \le X}\ \prod_C N_C^{2e(G,C)}.
\]
The generating Dirichlet series for the righthand side is multiplicative, given by
\[
    \sum_{(N_C)} \prod_{C\le G} N_C^{2e(G,C) - s\ind_n(C)} = \prod_p \left(1 + \sum_{C\le G} p^{2e(G,C) - s\ind_n(C)}\right).
\]
This converges absolutely whenever ${\rm Re}(s) > \frac{2e(G,C)+1}{\ind_n(C)}$ for each nontrivial cyclic $C\le G$, i.e. ${\rm Re}(s) > \beta(G)$. The result then follows bounding from the integral in Perron's formula on the line ${\rm Re}(s) = \beta(G) + \epsilon$.
 
The bound $3/a(G) \le \beta(G) \le 5/a(G)$ is immediate from $1\le e(G,C) \le 2$ and $\ind_n(C) \ge a(G)$ (where the lower bound comes from choosing $C$ with $\ind_n(C) = a(G)$).
\end{proof}

\section*{Acknowledgements}
The author thanks the American Institute of Mathematics (AIM) for hosting the author at the workshop ``AI and number theory'' as well as Anthropic and Ralph Furman for providing a free trial of the Claude Max subscription to the workshop participants. The author also thanks Robert Lemke Oliver for helpful discussions and feedback on an earlier draft.
 
This paper was prepared with the assistance of AI, specifically Claude Opus 4.8 and Fable 5. The mathematical ideas are entirely original work of the author. AI was used to assist with writing, literature search, and proofreading, greatly streamlining the process of preparing this paper.

\bibliographystyle{abbrv}
\bibliography{main.bbl}
\end{document}